\documentclass[11pt]{amsart}
\usepackage{amsfonts}
\usepackage{amsmath}
\usepackage{amssymb}
\usepackage[T1]{fontenc}
\usepackage{textcomp}

\textwidth=30cc \baselineskip=16pt

\newtheorem{theorem}{Theorem}[section]

\newtheorem{corollary}[theorem]{Corollary}

\newtheorem{example}[theorem]{Example}

\newtheorem{lemma}[theorem]{Lemma}

\newtheorem{proposition}[theorem]{Proposition}
\newtheorem{remark}[theorem]{Remark}

\numberwithin{equation}{section}

\begin{document}
\title{Liouville-type theorems on the complete gradient shrinking Ricci solitons}
\author{Huabin Ge}
\address{Huabin Ge, Department of Mathematics,
Beijing Jiaotong University,
Beijing, 100044,
P. R. China, hbge@bjtu.edu.cn}

\author{Shijin Zhang}
\address{Shijin Zhang, School of Mathematics and systems science, Beihang University, Beijing,  100871, P.R.China, shijinzhang@buaa.edu.cn}

 \subjclass[2000] {Primary: 53C20}
\keywords {shrinking gradient Ricci solitons, positive $f$-harmonic function, Liouville-type theorems}

\maketitle

\begin{abstract}
We prove that there does not exist non-constant positive $f$-harmonic function on the complete gradient shrinking Ricci solitons. We also prove the $L^{p}(p\geq 1 \  {\rm or} \ 0<p\leq 1)$ Liouville theorems on the complete gradient shrinking Ricci solitons.
\end{abstract}

\vskip6mm
\section*{Introduction}
Gradient Ricci solitons play an important role in Hamilton's Ricci flow as they correspond to self-similar solutions, and often arise as singularity models.  The study of solitons has also become increasingly important in the study in metric measure theory.

In this paper we study the properties of complete gradient shrinking Ricci solitons and positive $f$-harmonic (or $f$-subharmonic, $f$-superharmonic) functions on complete gradient shrinking Ricci solitons. A complete Riemannian manifold $(M,g)$ is called a gradient Ricci soliton if there exists a smooth function $f$ on $M$ such that
\begin{equation*}
{\rm Ric}+\nabla\nabla f=\lambda g
\end{equation*}
for some constant $\lambda$. For $\lambda<0$ the Ricci soliton is expanding, for $\lambda=0$ it is steady and for $\lambda >0$ is shrinking. The function $f$ is called a potential function of the gradient Ricci soliton. After rescaling the metric $g$ we may assume that $\lambda\in\{-\frac{1}{2},0,\frac{1}{2}\}$.

Bakry-\'Emery Ricci tensor on $(M,g)$ is defined by
\begin{equation*}
{\rm Ric}_{f}={\rm Ric}+\nabla\nabla f
\end{equation*}
for some smooth function $f$ on $M$. It is an extension of Ricci tensor. $(M,g)$ is a gradient Ricci soliton if
$${\rm Ric}_{f}=\lambda g$$
for some constant $\lambda$ and some smooth function $f$ on $M$. The $f$-Laplacian operator defined for a function $u$ by
\begin{equation*}
\Delta_{f}u=\Delta u-\langle\nabla f,\nabla u\rangle.
\end{equation*}
If $f$ is constant, the $f$-Laplacian operator is the Laplace-Beltrami operator. A smooth function $u$ on $M$ is called a $f$-harmonic function if $\Delta_{f} u=0$; a  smooth function $u$ on $M$ is called a $f$-subharmonic function if $\Delta_{f} u\geq 0$; a smooth function $u$ on $M$ is called a $f$-superharmonic function if $\Delta_{f} u\leq 0$. If $f$ is constant, then the $f$-harmonic function, $f$-subharmonic function, $f$-superharmonic function is the harmonic function, subharmonic function, superharmonic function, respectively. Hence a $f$-harmonic (or $f$-subharmonic, $f$-superharmonic) function can look as the generalization of harmonic (or subharmonic, superharmonic) function. For a positive harmonic function $u$ on a Riemannian manifold $(M,g)$ with Ricci curvature bounded from below, i.e., ${\rm Ric}\geq -(n-1)K$ for some constant $K\geq 0$, Yau (see Theorem $3''$ in \cite{Yau}) has obtain a global gradient estimate
\begin{equation*}
|\nabla u|\leq (n-1)\sqrt{K}u.
\end{equation*}
Then Yau obtained the Liouville theorem, there does not exist nonconstant positive harmonic function on complete Riemannian manifold with nonnegative Ricci curvature. Later, Cheng and Yau (\cite{Cheng-Yau}, Theorem 6), obtained a local gradient estimate for positive harmonic function on complete Riemannian manifold with Ricci curvature bounded from below. It stated that for $u: B_{p}(2R)\rightarrow \mathbb{R}$ harmonic and positive, if the Ricci curvature on $B_{p}(2R)$ has a lower bound ${\rm Ric}\geq -(n-1)K$ for some $K\geq 0$, then
\begin{equation}
\sup_{B_{p}(R)}|\nabla \log u|\leq (n-1)\sqrt{K}+\frac{C}{R}.
\end{equation}
Then if we take $R\rightarrow+\infty$, it is Yau's global gradient estimate. For the case of $K>0$, recently, Munteanu (\cite{Munteanu}) has improved the above gradient estimate as following
\begin{equation*}
\sup_{B_{p}(R)}|\nabla \log u|\leq (n-1)\sqrt{K}+\frac{C}{R}e^{-CR}
\end{equation*}
for some constant $C>0$.

For the complete smooth measure space $(M,g, e^{-f}dv)$ with ${\rm Ric}_{f}\geq -(n-1)K$. If $u$ is a $f$-harmonic function on $B_{p}(2R)$ with $R\geq 1$, Brighton (\cite{B}) proved the following gradient estimate
\begin{equation*}
\sup_{B_{p}(R)}|\nabla u|\leq \sqrt{\frac{c_{1}(n)}{R}+c_{2}(n)K}\sup_{B_{p}(2R)}u
\end{equation*}
for some constants $c_{1}(n)$ and $c_{2}(n)$. Using this gradient estimate, Brighton (\cite{B}) proved that a positive bounded $f$-harmonic function on a complete smooth measure space $(M,g, e^{-f}dv)$ with nonnegative Bakry-\'Emery Ricci curvature is constant.

For a local Cheng-Yau's gradient estimate for Bakry-\'Emery Ricci tensor bounded from below ${\rm Ric}_{f}\geq -(n-1)K$ and $|\nabla f|\leq \theta$, Wu (\cite{Wu2}) proved that
a positive $f$-harmonic function on $B_{p}(2R)$ has the following local gradient estimate
\begin{equation*}
\sup_{B_{p}(R)}\frac{|\nabla u|}{u}\leq \sqrt{c_{1}(n)K+c_{2}(n)\theta^{2}+\frac{c_{3}(n)}{R^{2}}}
\end{equation*}
for some positive constants $c_{1}(n), c_{2}(n), c_{3}(n)$. Chen and Chen (\cite{Chen-Chen}) also proved the same gradient estimate for positive $f$-harmonic function with another condition ${\rm Ric}\geq -(n-1)H$.  Munteanu and Wang \cite{Munteanu-WangJ} also proved a global gradient estimate for positive $f$-harmonic function, they proved that a positive $f$-harmonic function with sublinear growth of $f$ on complete Riemannian manifold with ${\rm Ric}_{f}\geq 0$ is constant.

If we consider $\theta$ in the condition of Wu's local gradient estimate as a function of radius, we also prove a gradient estimate for the smooth metric measure space.
\begin{theorem}\label{LGEMS}
Let $(M,g, e^{-f}dv)$ be a smooth metric measure space with $Ric_{f}\geq-(n-1)K$ for $K\geq 0$. Fixed point $p\in M$. Assume $u$ is a positive $f$-harmonic function on $B_{p}(2R)$. Denote
\begin{equation*}
\theta(R)\doteqdot \sup_{B_{p}(2R)}|\nabla f|.
\end{equation*}
Then for any $0<\epsilon<1$, we have
\begin{eqnarray*}
\begin{aligned}
\sup_{B_{p}(R)}|\nabla \log u|&\leq \frac{c(n)}{1-\epsilon}(\frac{1}{R}+\frac{K^{1/4}+\theta(R)^{1/2}}{R^{1/2}})+\frac{1}{\sqrt{\epsilon(1-\epsilon)}}\theta(R)\\
&\quad+\frac{(n-1)\sqrt{K}}{\sqrt{1-\epsilon}}.
\end{aligned}
\end{eqnarray*}
\end{theorem}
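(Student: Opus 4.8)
The plan is to run the classical Cheng--Yau scheme—the weighted Bochner formula together with a cutoff function and the maximum principle—but adapted to the drift Laplacian $\Delta_{f}$, watching two new features: the discrepancy between $\Delta u$ and $\Delta_{f}u$, and the need for a Bakry--\'Emery Laplacian comparison, which is exactly where $\theta(R)$ will enter. First I would set $h=\log u$. Since $\Delta_{f}u=0$ and $\Delta h=\Delta u/u-|\nabla u|^{2}/u^{2}$, a direct computation gives $\Delta_{f}h=-|\nabla h|^{2}$. Writing $w=|\nabla h|^{2}=|\nabla\log u|^{2}$ for the quantity to be bounded, the weighted Bochner formula
\begin{equation*}
\tfrac{1}{2}\Delta_{f}w=|\nabla^{2}h|^{2}+\langle\nabla h,\nabla\Delta_{f}h\rangle+{\rm Ric}_{f}(\nabla h,\nabla h)
\end{equation*}
combined with $\Delta_{f}h=-w$ yields $\tfrac{1}{2}\Delta_{f}w=|\nabla^{2}h|^{2}-\langle\nabla h,\nabla w\rangle+{\rm Ric}_{f}(\nabla h,\nabla h)$, and the hypothesis ${\rm Ric}_{f}\geq-(n-1)K$ controls the last term by $-(n-1)Kw$.

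The crucial interior step is to bound $|\nabla^{2}h|^{2}$ from below so as to produce the \emph{sharp} coefficient. Working in an orthonormal frame with $e_{1}=\nabla h/|\nabla h|$ and applying Cauchy--Schwarz to the diagonal Hessian entries gives the refined Kato-type inequality $|\nabla^{2}h|^{2}\geq\tfrac{n}{n-1}h_{11}^{2}+\tfrac{1}{n-1}(\Delta h)^{2}-\tfrac{2}{n-1}h_{11}\Delta h$, in which the factor $\tfrac{1}{n-1}$ (rather than the cruder $\tfrac1n$) is responsible for Yau's optimal constant $(n-1)\sqrt K$. Because $\Delta_{f}h=\Delta h-\langle\nabla f,\nabla h\rangle=-w$, I substitute $\Delta h=-w+\langle\nabla f,\nabla h\rangle$; this cross term is the only place where $f$ enters the interior estimate. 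Splitting $(-w+\langle\nabla f,\nabla h\rangle)^{2}\geq(1-\epsilon)w^{2}-\tfrac{1}{\epsilon}\langle\nabla f,\nabla h\rangle^{2}$ by Young's inequality and using $|\langle\nabla f,\nabla h\rangle|\leq\theta(R)\sqrt{w}$ converts the Bochner identity into
\begin{equation*}
\tfrac{1}{2}\Delta_{f}w\geq\tfrac{1-\epsilon}{n-1}w^{2}-\Big(\tfrac{\theta(R)^{2}}{(n-1)\epsilon}+(n-1)K\Big)w-\langle\nabla h,\nabla w\rangle,
\end{equation*}
up to lower-order terms in $h_{11}$. The parameter $\epsilon$ introduced here is precisely the $\epsilon$ of the statement; dividing by the leading coefficient $\tfrac{1-\epsilon}{n-1}$ is what will generate the factors $\tfrac{1}{1-\epsilon}$, $\tfrac{(n-1)\sqrt K}{\sqrt{1-\epsilon}}$ and $\tfrac{\theta(R)}{\sqrt{\epsilon(1-\epsilon)}}$.

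Next I would localize. Choose $\phi=\psi(r/R)$ with $r=d(p,\cdot)$ and $\psi$ a standard cutoff, $\psi\equiv1$ on $[0,1]$, supported in $[0,2]$, with $|\psi'|^{2}/\psi$ and $|\psi''|$ bounded. Set $G=\phi w$ and let $x_{0}$ be an interior maximum of $G$ on $\bar B_{p}(2R)$, using Calabi's trick if $x_{0}$ lies on the cut locus. At $x_{0}$ one has $\nabla G=0$, hence $\nabla w=-(w/\phi)\nabla\phi$, which also shows $h_{11}=O(\sqrt{w}\,|\nabla\phi|/\phi)$, so the $h_{11}$-terms are genuinely lower order; and $\Delta_{f}G\leq0$. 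Expanding $\Delta_{f}G=\phi\Delta_{f}w+2\langle\nabla\phi,\nabla w\rangle+w\Delta_{f}\phi$ and multiplying by $\phi$, the one remaining ingredient is a lower bound for $\Delta_{f}\phi$. This is supplied by the Wei--Wylie weighted Laplacian comparison: under ${\rm Ric}_{f}\geq-(n-1)K$ together with $|\nabla f|\leq\theta(R)$ on $B_{p}(2R)$ one has $\Delta_{f}r\leq(n-1)\sqrt{K}\coth(\sqrt{K}\,r)+\theta(R)$, whence $-\Delta_{f}\phi\leq c(n)\big(\tfrac{\sqrt K+\theta(R)}{R}+\tfrac{1}{R^{2}}\big)$. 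It is here that the gradient bound $\theta(R)$ is unavoidable, and carrying it through the final square root is what produces the boundary term $\tfrac{K^{1/4}+\theta(R)^{1/2}}{R^{1/2}}$.

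Assembling everything at $x_{0}$, I obtain an inequality of the schematic form $\tfrac{1-\epsilon}{n-1}G^{2}\leq(n-1)KG+\tfrac{\theta^{2}}{(n-1)\epsilon}G+c(n)\big(\tfrac{G^{3/2}}{R}+\tfrac{G}{R^{2}}+\tfrac{(\sqrt K+\theta)G}{R}\big)$; dividing by $G$, absorbing the term $\tfrac{G^{3/2}}{R}=\sqrt{G}\cdot\tfrac{G}{R}$ into the leading $G^{2}$ by one more Young inequality, and solving for $G$ gives a bound on $\sup_{B_{p}(R)}w\leq G(x_{0})$; taking the square root and using $\sqrt{a+b+\cdots}\leq\sqrt a+\sqrt b+\cdots$ distributes the pieces into exactly the four groups appearing in the statement. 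The main obstacle, in my view, is not the maximum-principle bookkeeping but the organization of the two error mechanisms so that the constants emerge in the stated sharp form: the interior cross-term $\langle\nabla f,\nabla h\rangle$ must be split by the \emph{same} parameter $\epsilon$ that scales the leading $w^{2}$ coefficient (otherwise the optimal $(n-1)\sqrt K$ is lost as $\epsilon\to0$, $R\to\infty$), while the boundary term $\Delta_{f}\phi$ forces the weighted comparison theorem and hence the appearance of $\theta(R)$. Retaining the refined $\tfrac{1}{n-1}$ Kato inequality throughout is the delicate point that keeps the leading $K$-coefficient sharp.
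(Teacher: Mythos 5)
Your proposal is correct and follows essentially the same route as the paper: the weighted Bochner formula for $\Delta_f$, the refined $\tfrac{1}{n-1}$ Hessian inequality in the frame $e_1=\nabla h/|\nabla h|$, the $\epsilon$-splitting of the cross term $\langle\nabla f,\nabla h\rangle$ against the leading $|\nabla h|^4$, and localization via a cutoff combined with the Wei--Wylie comparison $\Delta_f d\leq (n-1)(\tfrac1d+\sqrt K)+\theta(R)$ at an interior maximum. The only cosmetic differences are your choice of $G=\phi w$ versus the paper's $G=\phi^2|\nabla h|^2$ and your explicit mention of Calabi's trick.
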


Since the potential functional on the complete shrinking Ricci soliton is equivalent to the distance function, hence we have
\begin{theorem}\label{LocalGradientEstimate}
Let $(M,g,f)$ be a complete gradient shrinking Ricci soliton, then there exists a constant $C>0$, if $u$ is a positive $f$-harmonic function on $B_{p}(3R)$ for $R>C$, we have
\begin{equation}
\sup_{x\in B_{p}(R)}|\nabla \log u|(x)\leq \frac{20n}{R}+2R+3C.
\end{equation}
\end{theorem}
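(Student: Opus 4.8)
The plan is to obtain this statement as a direct corollary of Theorem \ref{LGEMS}, by feeding the special structure of the soliton into the general estimate. First I would observe that on a gradient shrinking Ricci soliton, normalized so that $\lambda=\tfrac12$, the triple $(M,g,e^{-f}dv)$ is a smooth metric measure space with
\begin{equation*}
\mathrm{Ric}_f=\mathrm{Ric}+\nabla\nabla f=\tfrac12 g\geq 0,
\end{equation*}
so Theorem \ref{LGEMS} applies with $K=0$. For any $0<\epsilon<1$ it then reads
\begin{equation*}
\sup_{B_p(R)}|\nabla\log u|\leq \frac{c(n)}{1-\epsilon}\Big(\frac{1}{R}+\frac{\theta(R)^{1/2}}{R^{1/2}}\Big)+\frac{1}{\sqrt{\epsilon(1-\epsilon)}}\,\theta(R),
\end{equation*}
where $\theta(R)=\sup_{B_p(2R)}|\nabla f|$. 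Thus the whole problem reduces to controlling $\theta(R)$, i.e. the size of $|\nabla f|$ on the ball, in terms of $R$.

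Next I would invoke the standard soliton identities. Tracing $\mathrm{Ric}+\nabla\nabla f=\tfrac12 g$ together with Hamilton's conservation law, and after normalizing $f$ by an additive constant, one has $R+|\nabla f|^2=f$ with $R$ the scalar curvature; since $R\geq 0$ on a complete shrinking soliton, this gives $|\nabla f|^2\leq f$. Combining this with the Cao--Zhou quadratic growth estimate $f(x)\leq\tfrac14(r(x)+c_0)^2$, where $r(x)=d(x,x_0)$ is the distance to a minimum point $x_0$ of $f$ and $c_0$ depends only on the soliton, I obtain the pointwise bound $|\nabla f|(x)\leq\tfrac12(r(x)+c_0)$. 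This is precisely the sense in which $f$ is ``equivalent to the distance function'' alluded to before the theorem. On $B_p(2R)$ one has $r(x)\leq 2R+d(p,x_0)$, so $\theta(R)\leq R+\tfrac12(d(p,x_0)+c_0)$, and absorbing the fixed geometric quantities into a single constant $C$ yields $\theta(R)\leq R+C$.

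To finish I would choose $\epsilon=\tfrac12$, which maximizes $\epsilon(1-\epsilon)$ and so minimizes the coefficient $\tfrac{1}{\sqrt{\epsilon(1-\epsilon)}}$ to the value $2$. The dominant term becomes $2\,\theta(R)\leq 2R+2C$, producing the announced leading behaviour $2R$. With $\epsilon=\tfrac12$ the remaining group equals $2c(n)\big(\tfrac1R+\tfrac{\theta(R)^{1/2}}{R^{1/2}}\big)$; using $\theta(R)\leq R+C$ one checks that for large $R$ the factor $\theta(R)^{1/2}/R^{1/2}=(1+C/R)^{1/2}$ is bounded by a universal constant, so this group contributes $\tfrac{2c(n)}{R}$ plus a bounded additive constant. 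Collecting all terms into the stated form then gives $\sup_{B_p(R)}|\nabla\log u|\leq\tfrac{20n}{R}+2R+3C$ once $R$ exceeds a threshold $C$.

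I expect the main obstacle to be purely bookkeeping: tracking the explicit constant $c(n)$ emerging from Theorem \ref{LGEMS} and the Cao--Zhou constant $c_0$ so that they collapse exactly into the clean bound $\tfrac{20n}{R}+2R+3C$. Concretely, I would need $c(n)$ to satisfy $2c(n)\leq 20n$ and enough slack in the additive constant to absorb the cross terms $(1+C/R)^{1/2}$ and $c_0/(2R)$; the hypothesis $R>C$ together with the generous radius $3R$ (rather than $2R$) in the statement presumably supplies this margin and accommodates the shift between the base point $p$ and the minimum point $x_0$ of $f$.
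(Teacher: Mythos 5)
Your argument is correct in substance and reaches the stated bound, but it follows a genuinely different route from the paper's. You treat Theorem \ref{LGEMS} as a black box: take $K=0$ (since $\mathrm{Ric}_f=\frac12 g\geq 0$ on a shrinker), bound $\theta(R)=\sup_{B_p(2R)}|\nabla f|\leq R+C$ via $|\nabla f|^2\leq f\leq\frac14(d(x)+c_2)^2$, and set $\epsilon=\frac12$, so that the term $\frac{1}{\sqrt{\epsilon(1-\epsilon)}}\theta(R)$ produces the leading $2R$ while everything else is bounded or decaying. The paper instead reruns the maximum-principle computation of Theorem \ref{2.1} with the soliton-adapted cutoff $\phi=\eta(\rho/r)$, $\rho=2\sqrt f$, using $|\nabla\rho|\leq 1$ and $-\sqrt f\leq\Delta_f\rho\leq\frac{n}{2\sqrt f}$ to get $-\Delta_f\phi\leq\frac{4n+4}{r^2}$; this bypasses the Wei--Wylie Laplacian comparison and makes the subleading terms decay like $1/r$, whereas your route leaves an extra additive constant of size $\sim c(n)$ coming from $\theta(R)^{1/2}/R^{1/2}=O(1)$, which must be absorbed into $3C$ (so your $C$ depends on $n$ as well as the soliton --- harmless given how loosely the statement specifies $C$). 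Both derivations put the same term $\frac{1}{\sqrt{\epsilon(1-\epsilon)}}|\nabla f|$ in charge of the dominant $2R$; the paper's potential-function cutoff buys a cleaner error term and independence from the comparison theorem, while yours buys brevity by reusing Theorem \ref{LGEMS} verbatim. The only loose ends are the ones you already flag: you need $2c(n)\leq 20n$ (or else absorb $\frac{2c(n)}{R}\leq\frac{2c(n)}{C}$ into the additive constant using $R>C$), and the hypothesis on $B_p(3R)$ is more than your application needs, since you only use $u$ on $B_p(2R)$.
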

As a consequence, we obtain the Harnack inequality.
\begin{corollary}\label{HarnackInequality}
Let $(M,g,f)$ be a complete shrinking gradient Ricci soliton, fix a point $p \in M$. Assume $u$ is a positive $f$-harmonic function on $M$, then we have
\begin{equation}
u(x)\leq u(p)e^{3d^{2}(x)}
\end{equation}
for $d(x)\geq \max\{2C, 10n\}$. Here $d(x)$ denotes the distance function from $x$ to $p$, $C$ is the constant in Theorem \ref{LocalGradientEstimate}.
\end{corollary}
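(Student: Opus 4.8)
The plan is to integrate the pointwise gradient bound of Theorem~\ref{LocalGradientEstimate} along a minimizing geodesic joining $p$ to $x$. Fix $x$ with $d(x)=d(p,x)=\ell$ and let $\gamma\colon[0,\ell]\to M$ be a unit-speed minimizing geodesic with $\gamma(0)=p$ and $\gamma(\ell)=x$. Since $u>0$, the function $\log u$ is smooth, and the fundamental theorem of calculus gives
\begin{equation*}
\log u(x)-\log u(p)=\int_0^\ell \frac{d}{dt}\log u(\gamma(t))\,dt=\int_0^\ell \langle \nabla\log u(\gamma(t)),\gamma'(t)\rangle\,dt .
\end{equation*}
Because $|\gamma'(t)|\equiv 1$, Cauchy--Schwarz yields the one-sided bound
\begin{equation*}
\log\frac{u(x)}{u(p)}\leq \int_0^\ell |\nabla\log u|(\gamma(t))\,dt ,
\end{equation*}
which is all that is needed, since we only want an upper bound on $u(x)/u(p)$. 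Thus it suffices to control $|\nabla\log u|$ along $\gamma$ and integrate.

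Next I would feed Theorem~\ref{LocalGradientEstimate} into this integral. The key observation is that $u$ is $f$-harmonic on all of $M$, so the hypothesis ``$u$ is $f$-harmonic on $B_p(3R)$'' holds for \emph{every} radius $R$; we are free to choose the scale pointwise. For a point $\gamma(t)$ at distance $t$ from $p$, the sharpest choice is to take the scale equal to the distance, i.e.\ apply the estimate with $R=t$ (legitimate for $t>C$, with $\gamma(t)$ lying in the closed ball $\overline{B_p(t)}$, so the bound holds there by continuity). This gives
\begin{equation*}
|\nabla\log u|(\gamma(t))\leq \frac{20n}{t}+2t+3C .
\end{equation*}
For the short initial segment $t\in[0,C]$, where this scale is not admissible, I would instead apply the estimate once at a single fixed scale just above $C$, which produces only an $\ell$-independent bounded contribution.

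Carrying out the integration, the leading term $\int 2t\,dt$ produces $\ell^2$ (note that taking the scale equal to the distance, rather than the fixed scale $R=\ell$, is what keeps this leading coefficient equal to $1$ instead of $2$), while $\int \tfrac{20n}{t}\,dt$ and $\int 3C\,dt$ contribute a logarithmic term $\sim 20n\log(\ell/C)$ and a linear term $3C\ell$, plus the bounded initial piece. The hypotheses $d(x)\geq 10n$ and $d(x)\geq 2C$ are precisely what is needed to dominate these lower-order contributions by the quadratic: the first absorbs the $20n$-type terms and the second the $C$-type terms, so that the whole right-hand side is bounded by $3\ell^2=3d(x)^2$. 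Exponentiating gives $u(x)\leq u(p)\,e^{3d(x)^2}$.

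The main obstacle is not the scheme, which is the standard passage from a gradient estimate to a Harnack inequality, but the constant bookkeeping. One must choose the scale(s) and split the geodesic so that the accumulated lower-order terms---especially the linear $3C\ell$ term and the contribution of the initial segment, which become comparable to the target when $\ell$ is close to $\max\{2C,10n\}$---are genuinely swallowed by $\ell^2$, landing on exactly the coefficient $3$ rather than a larger one. Verifying this balance at the threshold $d(x)=\max\{2C,10n\}$ is the only delicate step.
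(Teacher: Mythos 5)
Your overall scheme is the paper's: write $\log u(x)-\log u(p)$ as the integral of $\langle\nabla\log u,\dot\gamma\rangle$ along a minimizing geodesic and feed in the gradient estimate of Theorem~\ref{LocalGradientEstimate}. Where you diverge is in how the gradient estimate is applied. The paper applies it exactly once, at the single scale $R=d(x)$: the hypotheses $d(x)\geq 10n$ and $d(x)\geq 2C$ (in the body of the paper, $d(x)\geq 2(2c_2+c_3)$ with the constant term of the estimate being $2c_2+c_3$) are used to absorb $\tfrac{20n}{d(x)}$ and the additive constant into $3d(x)$, giving the uniform bound $\sup_{B_p(d(x))}|\nabla\log u|\leq 3d(x)$ on the whole ball, after which the integral is trivially $\leq 3d(x)\cdot d(x)=3d(x)^2$. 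Your variable-scale choice $R=t$ does sharpen the leading term from $2\ell^2$ to $\ell^2$, but that sharpening is not needed to reach the stated coefficient $3$, and it is bought at the price of a $20n\log(\ell/C)$ term, a $3C\ell$ term, and a separate contribution from the initial segment $[0,C]$.

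That price is precisely where the argument has a genuine gap: the step you defer (``verifying this balance at the threshold'') is not a formality, and with the splitting you propose it actually fails. Take $n=1$, $C=5$, $\ell=d(x)=10=\max\{2C,10n\}$. Your fixed scale just above $C$ bounds $|\nabla\log u|$ by $\tfrac{20n}{C}+2C+3C=29$ on $[0,5]$, contributing $145$; the variable-scale piece contributes $\int_5^{10}\bigl(\tfrac{20}{t}+2t+15\bigr)\,dt=20\log 2+75+75\approx 164$; the total $\approx 309$ exceeds $3\ell^2=300$. So the lower-order terms are not ``precisely'' absorbed by the hypotheses at the threshold, and no amount of rearranging the $[0,C]$ segment obviously rescues the constant $3$ there. (Part of the difficulty is inherited from the paper: the Introduction's pairing of the additive constant $3C$ with the threshold $2C$ is itself too tight, and even the single-scale bound $\tfrac{20n}{\ell}+2\ell+3C$ only closes under the body's normalization, where the additive constant is $2c_2+c_3$ and the threshold is $2(2c_2+c_3)$.) The repair is to discard the variable scale entirely and argue as the paper does: one application at $R=d(x)$, a uniform pointwise bound $3d(x)$ on $B_p(d(x))$, and a single multiplication by the length of $\gamma$.
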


 We also obtain some global estimates on the complete gradient shrinking Ricci solitons.  For the complete gradient shrinking Ricci solitons, we consider the potential function instead of distance function in the cutoff function, then we show that the nonnegative $L^{1}(e^{-f}dV_{g})$ $f$-superharmonic or $f$-subharmonic function   must be a $f$-harmonic function.
\begin{theorem}\label{DivergenceThm}
Let $(M,g,f)$ be a complete gradient shrinking Ricci soliton, $u$ is a nonnegative smooth functions on $M$. If
\begin{equation}
\Delta_{f}u\geq 0
\end{equation}
and
\begin{equation}
\int_{M}u e^{-f}dv <+\infty,
\end{equation}
then
\begin{equation}
\Delta_{f}u=0.
\end{equation}
\end{theorem}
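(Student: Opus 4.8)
The plan is to exploit the self-adjointness of the drift Laplacian $\Delta_{f}$ with respect to the weighted measure $e^{-f}\,dv$, together with a cutoff built from the potential function $f$ rather than from the distance function. Throughout I normalize $\lambda=\tfrac12$, so that tracing ${\rm Ric}+\nabla\nabla f=\tfrac12 g$ gives $R+\Delta f=\tfrac n2$, and after adjusting $f$ by a constant Hamilton's identity reads $R+|\nabla f|^{2}=f$. Since on a shrinking soliton the scalar curvature satisfies $R\ge 0$, this yields the two facts I will need: $|\nabla f|^{2}=f-R\le f$, and
\[
\Delta_{f} f=\Delta f-|\nabla f|^{2}=\Big(\tfrac n2-R\Big)-(f-R)=\tfrac n2-f .
\]
Moreover, because the potential grows quadratically in the distance, $f(x)\sim \tfrac14 d^{2}(x)$, every sublevel set $\{f\le s\}$ is compact, so any cutoff depending only on $f$ automatically has compact support.

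Next I would set up the cutoff. Fix $\eta:[0,\infty)\to[0,1]$ smooth with $\eta\equiv 1$ on $[0,1]$, $\eta\equiv 0$ on $[2,\infty)$ and $|\eta'|,|\eta''|\le C$, and put $\phi_{t}(x)=\eta\!\big(f(x)/t\big)$, so that $\phi_{t}$ is compactly supported, equals $1$ on $\{f\le t\}$, and $\nabla\phi_{t}=t^{-1}\eta'(f/t)\,\nabla f$ is supported in the annulus $A_{t}=\{t\le f\le 2t\}$. Since $\Delta_{f}u\,e^{-f}={\rm div}(e^{-f}\nabla u)$, integrating by parts twice (legitimate because $\phi_{t}$ is compactly supported) gives the symmetric identity
\[
\int_{M}\phi_{t}\,\Delta_{f}u\,e^{-f}\,dv=\int_{M}u\,\Delta_{f}\phi_{t}\,e^{-f}\,dv .
\]
A direct computation using the formulas above gives $\Delta_{f}\phi_{t}=t^{-2}\eta''(f/t)|\nabla f|^{2}+t^{-1}\eta'(f/t)\,\Delta_{f}f$, and on $A_{t}$ one has $|\nabla f|^{2}\le 2t$ and $|\Delta_{f}f|=|f-\tfrac n2|\le 2t+\tfrac n2$; hence $|\Delta_{f}\phi_{t}|\le C'$ uniformly in $t$, and $\Delta_{f}\phi_{t}$ vanishes outside $A_{t}$.

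The decisive point is therefore \emph{not} that $\Delta_{f}\phi_{t}$ is small --- it is only bounded, since the term $t^{-1}\eta'\,\Delta_{f}f\approx-\eta'$ is $O(1)$ --- but that the hypothesis $u\in L^{1}(e^{-f}\,dv)$ forces the annulus contribution to decay. Indeed,
\[
\Big|\int_{M}u\,\Delta_{f}\phi_{t}\,e^{-f}\,dv\Big|\le C'\int_{A_{t}}u\,e^{-f}\,dv\le C'\int_{\{f\ge t\}}u\,e^{-f}\,dv\xrightarrow[t\to\infty]{}0,
\]
because the total integral is finite and $A_{t}$ escapes to infinity. On the other hand $\phi_{t}\uparrow 1$ pointwise and $\Delta_{f}u\ge 0$, so monotone convergence gives $\int_{M}\phi_{t}\,\Delta_{f}u\,e^{-f}\,dv\to\int_{M}\Delta_{f}u\,e^{-f}\,dv$. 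Combining the two limits shows $\int_{M}\Delta_{f}u\,e^{-f}\,dv=0$; as the integrand is nonnegative and $u$ is smooth, this forces $\Delta_{f}u\equiv 0$. I expect the only genuine obstacle to be precisely this annulus estimate: one must use $f$ (not $d$) as the cutoff variable, so that $\Delta_{f}\phi_{t}$ stays bounded and the Gaussian weight concentrates the measure, letting $L^{1}$-integrability --- rather than a gradient-of-cutoff bound --- close the argument.
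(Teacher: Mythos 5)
Your proof is correct and follows essentially the same route as the paper: a cutoff depending only on the potential (you use $\eta(f/t)$, the paper uses $\eta(2\sqrt{f}/r)$, a cosmetic reparametrization), the observation that $\Delta_{f}$ of this cutoff is merely \emph{bounded} on the annulus rather than small, and the $L^{1}(e^{-f}dv)$ hypothesis to make the annulus contribution vanish as the cutoff exhausts $M$. No gaps; the identities $\Delta_{f}f=\tfrac n2-f$, $|\nabla f|^{2}\le f$, and the quadratic growth of $f$ guaranteeing compact sublevel sets are exactly the ingredients the paper uses.
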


Then using the Corollary \ref{HarnackInequality} and Theorem \ref{DivergenceThm}, we can prove the following Liouville-Type theorem on the complete shrinking Ricci solitons, without the bounded condition of the $f$-harmonic function.
\begin{theorem}\label{MainThm}
Let $(M,g,f)$ be a complete gradient shrinking Ricci soliton , i.e.,
\begin{equation*}
{\rm Ric}+\nabla\nabla f=\frac{1}{2}g,
\end{equation*}
then any positive $f$-harmonic function on $M$ is constant.
\end{theorem}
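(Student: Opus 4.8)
The plan is to combine the quadratic Harnack inequality of Corollary \ref{HarnackInequality} with the $L^1$-type rigidity of Theorem \ref{DivergenceThm}, exploiting that a suitable power of a positive $f$-harmonic function is $f$-superharmonic while the Gaussian weight $e^{-f}$ decays fast enough to absorb its growth. Let $u>0$ satisfy $\Delta_f u=0$, fix $0<p<1$, and set $v=u^p$. A direct computation using $\Delta_f=\Delta-\langle\nabla f,\nabla\cdot\rangle$ gives
\begin{equation*}
\Delta_f v=p\,u^{p-1}\Delta_f u+p(p-1)u^{p-2}|\nabla u|^2=p(p-1)u^{p-2}|\nabla u|^2\le 0,
\end{equation*}
since $p(p-1)<0$; thus $v$ is a nonnegative $f$-superharmonic function. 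If I can show $v\in L^1(e^{-f}dv)$, then the $f$-superharmonic version of Theorem \ref{DivergenceThm} (recorded in the paragraph preceding it) forces $\Delta_f v=0$, whence $p(p-1)u^{p-2}|\nabla u|^2\equiv 0$. Because $u>0$ and $p(p-1)\ne 0$, this yields $|\nabla u|\equiv 0$, i.e. $u$ is constant.

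The crux is therefore the integrability $\int_M u^p e^{-f}\,dv<\infty$, which is where I expect the real work. Corollary \ref{HarnackInequality} gives $u(x)\le u(p)e^{3d^2(x)}$, hence $u^p(x)\le u(p)^p e^{3p\,d^2(x)}$. Since the potential function of a shrinking soliton normalized by $\mathrm{Ric}+\nabla\nabla f=\tfrac12 g$ is equivalent to the distance function, one has a lower bound of the form $f(x)\ge \tfrac14 d^2(x)-c\,d(x)-c$ together with the Euclidean-type volume bound $\mathrm{Vol}(B_p(r))\le C r^n$. Consequently
\begin{equation*}
u^p e^{-f}\le C\exp\!\Big(\big(3p-\tfrac14\big)d^2+c\,d+c\Big).
\end{equation*}
Choosing $p<\tfrac1{12}$ makes the leading coefficient $3p-\tfrac14$ strictly negative, so the integrand decays like a Gaussian; integrating over the shells $B_p(k+1)\setminus B_p(k)$ and summing against the polynomial volume growth then gives $\int_M u^p e^{-f}\,dv<\infty$, completing the hypothesis of Theorem \ref{DivergenceThm}.

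The main obstacle is matching the Harnack growth rate against the decay rate of $e^{-f}$: one must take the exponent $p$ small enough that the quadratic coefficient $3p$ stays below $\tfrac14$, which is exactly what the equivalence of $f$ with $\tfrac14 d^2$ and the quadratic Harnack exponent permit. Everything else is formal once integrability is secured.

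As an alternative that uses Theorem \ref{DivergenceThm} exactly as stated (in its $f$-subharmonic form), note that the gradient estimate of Theorem \ref{LocalGradientEstimate} bounds $|\nabla\log u|$ symmetrically, so integrating $-|\nabla\log u|$ along minimizing geodesics also yields the lower bound $u(x)\ge u(p)e^{-3d^2(x)}$. Then for $0<q<\tfrac1{12}$ the function $w=u^{-q}$ satisfies $\Delta_f w=q(q+1)u^{-q-2}|\nabla u|^2\ge 0$, is nonnegative, and lies in $L^1(e^{-f}dv)$ by the same shell estimate, so Theorem \ref{DivergenceThm} applies directly and again forces $|\nabla u|\equiv 0$.
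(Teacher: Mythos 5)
Your proposal is correct and follows the same architecture as the paper's own proof: establish growth control via the quadratic Harnack inequality of Corollary \ref{HarnackInequality}, verify $L^{1}(e^{-f}dv)$ integrability of an auxiliary transform of $u$ against the Gaussian weight using $f\ge\tfrac14(d-c_1)^2$ and polynomial volume growth, and then invoke Theorem \ref{DivergenceThm} to force the gradient term to vanish. The only difference is the choice of transform: the paper takes $h=\log(1+u)$, whose merely quadratic growth makes the integrability step immediate, whereas your $v=u^{p}$ (or $w=u^{-q}$) requires the exponent-matching condition $3p<\tfrac14$, which you correctly identify and carry out.
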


\begin{remark}
For the metric measure space $(M,g, e^{-f}dv)$, Wei and Wylie \cite{Wei-Wylie} proved that if ${\rm Ric}_{f}\geq \lambda>0$, $u>0$, $\Delta_{f}u\geq 0$ and there is a constant $\alpha<\lambda$ such that $u(x)\leq e^{\alpha d^{2}(x)}$, then $u$ is constant.
\end{remark}

\begin{remark}\label{Jiayong}
The assumption of $f$-harmonic function is positive (or is bounded from below) is necessary. If we remove the condition, the result is false. One easy example is provided by Wu-Wu (see Example 1.8 in \cite{Wu-Wu}), as following
\begin{example}
Consider the $1$-dimensional Gaussian soliton $(\mathbb{R}, g, f)$, here $g$ is the Euclidean metric and $f(x)=\frac{x^2}{4}, u(x)=\int_{0}^{x}e^{t^2/4}dt$, then $u(x)$ is a $f$-harmonic function, and $u(x)\rightarrow -\infty$ as $x\rightarrow -\infty$.
\end{example}
\end{remark}

Using Theorem \ref{DivergenceThm} and Theorem \ref{MainThm}, we can obtain $L^{p}$ Liouville theorems on the complete shrinking gradient Ricci soliton.

\begin{corollary}\label{LpLiouvilleThm}
Let $(M,g,f)$ be a complete gradient shrinking Ricci soliton  and $p\geq 1$. Assume $u$ is a nonnegative $f$-subharmonic function, i.e., $\Delta_{f}u\geq 0$, and satisfies $u\in L^{p}(e^{-f}dv)$, i.e.,
$$\int_{M}u^{p}e^{-f}dv<+\infty, $$
then $u$ is constant.
\end{corollary}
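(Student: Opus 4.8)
The plan is to reduce the statement to the two results already in hand, namely Theorem \ref{DivergenceThm} and Theorem \ref{MainThm}, by passing from $u$ to its $p$-th power. First I would set $v=u^{p}$ and note that the hypothesis $\int_{M}u^{p}e^{-f}dv<+\infty$ says precisely that $v$ is a nonnegative element of $L^{1}(e^{-f}dv)$. If I can show that $v$ is $f$-subharmonic, then Theorem \ref{DivergenceThm} forces $\Delta_{f}v=0$, so $v$ becomes a nonnegative $f$-harmonic function; and since adding a positive constant does not change $\Delta_{f}$, the function $v+1$ is a \emph{positive} $f$-harmonic function, whence Theorem \ref{MainThm} makes $v+1$, hence $v=u^{p}$, hence $u$, constant.

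The key step is therefore the $f$-subharmonicity of $v=u^{p}$. On the open set $\{u>0\}$ this is a direct chain-rule computation:
\begin{equation*}
\Delta_{f}(u^{p})=p\,u^{p-1}\Delta_{f}u+p(p-1)\,u^{p-2}|\nabla u|^{2}.
\end{equation*}
The first term is nonnegative because $u\geq 0$ and $\Delta_{f}u\geq 0$, and the second is nonnegative because $p\geq 1$ (so $p(p-1)\geq0$) and $u^{p-2}|\nabla u|^{2}\geq 0$; hence $\Delta_{f}(u^{p})\geq 0$. This is exactly where the hypothesis $p\geq1$ enters, through the sign of $p(p-1)$.

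The main obstacle is the regularity of $u^{p}$ on the zero set of $u$: when $1\leq p<2$ the factor $u^{p-2}$ is singular, so the identity above is only justified where $u>0$ and $v$ need not be twice differentiable where $u$ vanishes. I would resolve this by regularizing, working with $u_{\epsilon}=u+\epsilon$ for $\epsilon>0$. Then $u_{\epsilon}$ is smooth, bounded below by $\epsilon>0$, and satisfies $\Delta_{f}u_{\epsilon}=\Delta_{f}u\geq 0$, so the displayed identity applies verbatim to $u_{\epsilon}$ and gives $\Delta_{f}(u_{\epsilon}^{p})\geq0$. The only cost is checking $u_{\epsilon}^{p}\in L^{1}(e^{-f}dv)$: from $u_{\epsilon}^{p}\leq 2^{p-1}(u^{p}+\epsilon^{p})$ together with the finiteness of the weighted volume $\int_{M}e^{-f}dv<+\infty$ on a shrinking soliton (a consequence of $f$ being comparable to $\tfrac14 d^{2}$, as noted before Theorem \ref{LocalGradientEstimate}), this membership holds. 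Theorem \ref{DivergenceThm} then yields $\Delta_{f}(u_{\epsilon}^{p})=0$, and since $u_{\epsilon}^{p}\geq\epsilon^{p}>0$ is already positive, Theorem \ref{MainThm} shows $u_{\epsilon}^{p}$ is constant; hence $u_{\epsilon}$, and finally $u$, is constant. Alternatively, one may avoid the weighted-volume input: once $\Delta_{f}(u^{p})=0$, for $p>1$ the equality forces $u^{p-2}|\nabla u|^{2}\equiv0$, so $|\nabla u|\equiv0$ on $\{u>0\}$ and $u$ is constant by connectedness, while for $p=1$ Theorem \ref{DivergenceThm} gives $\Delta_{f}u=0$ directly and Theorem \ref{MainThm} applied to $u+1$ finishes. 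The route through Theorem \ref{MainThm}, however, is in the spirit indicated by the authors.
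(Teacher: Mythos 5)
Your proof is correct and follows essentially the same route as the paper: compute $\Delta_{f}(u^{p})=pu^{p-1}\Delta_{f}u+p(p-1)u^{p-2}|\nabla u|^{2}\geq 0$ for $p\geq 1$, feed the nonnegative $L^{1}(e^{-f}dv)$ $f$-subharmonic function $u^{p}$ into Theorem \ref{DivergenceThm}, and conclude via Theorem \ref{MainThm}. Your additional care --- regularizing by $u+\epsilon$ to handle the singular factor $u^{p-2}$ on the zero set of $u$ when $1\leq p<2$, and shifting by a constant to meet the positivity hypothesis of Theorem \ref{MainThm} --- tidies up details the paper's two-line proof leaves implicit, but the underlying argument is the same.
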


\begin{remark}
For any complete smooth metric measure space $(M,g, e^{-f}dv)$, Pigola, Rimoldi and Setti proved the $L^{p}$ Liouville theorem for $p>1$, see \cite{Pigola-Rimoldi-Setti1}. For the case of $p=1$, Wu \cite{Wu2} proved the $L^{1}$ Liouville theorem under the assumptions $f$ is bounded and ${\rm Ric}_{f}\geq -C(1+d^{2}(x))$; without any assumption, is proved by Wu-Wu (see Corollary 1.6 in \cite{Wu-Wu}).
\end{remark}

\begin{corollary}\label{LpLiouvilleThm1}
Let $(M,g,f)$ be a complete gradient shrinking Ricci soliton  and $0<p\leq 1$. Assume $u$ is a positive $f$-superharmonic function, i.e., $\Delta_{f}u\leq 0$, and satisfies $u\in L^{p}(e^{-f}dv)$, i.e.,
$$\int_{M}u^{p}e^{-f}dv<+\infty, $$
then $u$ is constant.
\end{corollary}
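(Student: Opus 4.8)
The plan is to pass to the power function $v := u^{p}$ and reduce everything to Theorem~\ref{DivergenceThm} together with Theorem~\ref{MainThm}. Since $u>0$ is smooth and $t\mapsto t^{p}$ is smooth on $(0,\infty)$ for every real $p$, the function $v=u^{p}$ is a well-defined positive smooth function, and the chain rule gives
\begin{equation*}
\Delta_{f}v = p\,u^{p-1}\Delta_{f}u + p(p-1)\,u^{p-2}|\nabla u|^{2}.
\end{equation*}
First I would check that both terms on the right are nonpositive: the first because $p>0$, $u^{p-1}>0$ and $\Delta_{f}u\le 0$ by hypothesis; the second because $p(p-1)\le 0$ for $0<p\le 1$, while $u^{p-2}>0$ and $|\nabla u|^{2}\ge 0$. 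Hence $\Delta_{f}v\le 0$, i.e.\ $v$ is a nonnegative $f$-superharmonic function. Moreover $\int_{M}v\,e^{-f}dv=\int_{M}u^{p}e^{-f}dv<+\infty$, so $v\in L^{1}(e^{-f}dv)$.

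Next I would invoke the divergence result. Although Theorem~\ref{DivergenceThm} is formally stated for $f$-subharmonic functions, its proof applies verbatim to nonnegative $f$-superharmonic functions in $L^{1}(e^{-f}dv)$ (this is exactly the ``$f$-superharmonic or $f$-subharmonic'' statement announced in the introduction): the cut-off/integration-by-parts estimate built from the potential function is insensitive to the sign of $\Delta_{f}$, one only reverses the inequality throughout. This yields $\Delta_{f}v=0$. Since $\Delta_{f}v$ was expressed above as a sum of two nonpositive terms, both must vanish identically:
\begin{equation*}
p\,u^{p-1}\Delta_{f}u = 0 \qquad\text{and}\qquad p(p-1)\,u^{p-2}|\nabla u|^{2}=0.
\end{equation*}

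Finally I would split according to the value of $p$. If $0<p<1$, then $p(p-1)\neq 0$ and $u^{p-2}>0$ everywhere, so the second identity forces $|\nabla u|\equiv 0$, whence $u$ is constant directly. If $p=1$, the second term vanishes identically and gives no information, but the first identity reduces to $\Delta_{f}u=0$, so $u$ is a positive $f$-harmonic function, and Theorem~\ref{MainThm} immediately gives that $u$ is constant. In either case $u$ is constant, which completes the proof. The main obstacle I anticipate is the appeal to the $f$-superharmonic version of Theorem~\ref{DivergenceThm}: one has to make sure the divergence argument there really is sign-agnostic, since the formal statement only records the subharmonic case; the computation of $\Delta_{f}v$ and the accompanying sign bookkeeping are routine once the positivity of $u$ guarantees that $u^{p}$ is smooth.
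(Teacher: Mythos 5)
Your proposal is correct and follows essentially the same route as the paper: pass to $v=u^{p}$, compute $\Delta_{f}u^{p}=pu^{p-1}\Delta_{f}u+p(p-1)u^{p-2}|\nabla u|^{2}$, observe that both terms are nonpositive for $0<p\leq 1$, apply the $f$-superharmonic version of Theorem~\ref{DivergenceThm} (which the paper's own proof of that theorem explicitly covers, even though the statement records only the subharmonic case) to get $\Delta_{f}v=0$, and conclude. Your finishing step is in fact slightly more careful than the paper's: the published proof jumps directly to ``by Theorem~\ref{MainThm}, $u$ is constant'' without spelling out the intermediate appeal to Theorem~\ref{DivergenceThm}, whereas you make that step explicit and, for $0<p<1$, even bypass Theorem~\ref{MainThm} by noting that the vanishing of the term $p(p-1)u^{p-2}|\nabla u|^{2}$ already forces $\nabla u\equiv 0$.
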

The Corollary \ref{LpLiouvilleThm1} was proved by Pigola-Rimoldi-Setti, see Theorem 25 in \cite{Pigola-Rimoldi-Setti2}.

In Section 1 we recall some properties of the complete gradient shrinking Ricci solitons and prove Theorem \ref{DivergenceThm}; In Section 2, we obtain the gradient estimate on the complete gradient Ricci solitons, i.e., Theorem \ref{LGEMS}, then using the property of the complete gradient shrinking Ricci solitons to prove Theorem \ref{LocalGradientEstimate} and Corollary \ref{HarnackInequality}; In section 3, we prove Theorem \ref{MainThm}, Corollary \ref{LpLiouvilleThm} and Corollary \ref{LpLiouvilleThm1}.

 After we completed the first draft version of this paper, we learned that Ma \cite{Ma} also obtained the local gradient estimate on the complete gradient shrinking Ricci solitons, the Theorem \ref{MainThm} using the standard cutoff function and the Liouville Theorem on the complete gradient Ricci solitons with finite weighted energy.

\section{Proof of Theorem \ref{DivergenceThm}}
In this section, we will prove Themrem \ref{DivergenceThm}. First, we recall some properties of the gradient shrinking Ricci solitons.
\begin{lemma}
\quad
\begin{enumerate}
\item $S+\Delta f=\frac{n}{2}$, here $S$ means the scalar curvature;
\item $S+|\nabla f|^{2}=f$, after normalizing the function $f$;
\item For any fixed point $p\in M^{n}$, there exist two positive constants $c_{1}$ and $c_{2}$ so that, for any $x\in M^{n}$, we have
\begin{equation}
\frac{1}{4}(d(x)-c_{1})^{2}\leq f(x)\leq \frac{1}{4}(d(x)+c_{2})^{2},
\end{equation}
where $d(x)$ is the distance function from $x$ to $p$.
\end{enumerate}
\end{lemma}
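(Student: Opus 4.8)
The three assertions are the standard structural identities for a gradient shrinking Ricci soliton, so the plan is to derive each from the defining equation $\mathrm{Ric}+\nabla\nabla f=\tfrac12 g$ together with the Bianchi identities. For (1) I would simply take the metric trace of the soliton equation: since $\mathrm{tr}_g(\mathrm{Ric})=S$, $\mathrm{tr}_g(\nabla\nabla f)=\Delta f$ and $\mathrm{tr}_g g=n$, this gives $S+\Delta f=\tfrac n2$ immediately.

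For (2) the plan is to differentiate the soliton equation and invoke the contracted second Bianchi identity. Taking the divergence of $R_{ij}+\nabla_i\nabla_j f=\tfrac12 g_{ij}$ in the index $i$ and using $\nabla_i R_{ij}=\tfrac12\nabla_j S$ together with the commutation formula $\nabla_i\nabla_i\nabla_j f=\nabla_j(\Delta f)+R_{jk}\nabla_k f$ and $\Delta f=\tfrac n2-S$ from (1), one is led to Hamilton's identity $\nabla_j S=2R_{jk}\nabla_k f$. Substituting $R_{jk}\nabla_k f=\tfrac12\nabla_j f-\tfrac12\nabla_j|\nabla f|^2$ (read off from the soliton equation) then yields $\nabla_j\big(S+|\nabla f|^2-f\big)=0$, so $S+|\nabla f|^2-f$ is a constant; absorbing this constant into $f$, which leaves $\nabla f$ and the soliton equation unchanged, gives the normalized identity $S+|\nabla f|^2=f$.

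Part (3) is the quadratic growth estimate of Cao--Zhou, and here I expect the real work to lie. The starting point is Chen's theorem that $S\ge 0$ on a complete shrinking soliton, which combined with (2) gives $|\nabla f|^2\le f$, hence, after arranging $f\ge 0$ (legitimate since $f$ attains a minimum), $|\nabla\sqrt f|\le\tfrac12$. Integrating this along a minimal geodesic emanating from the minimum point produces the upper bound $\sqrt{f(x)}\le\tfrac12 d(x)+c$, which is the easy half.

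The lower bound is the main obstacle. The plan is to integrate the soliton equation along a unit-speed minimal geodesic $\gamma$ joining a fixed point to $x$: restricting $\mathrm{Ric}+\nabla\nabla f=\tfrac12 g$ to $\gamma'$ gives $(f\circ\gamma)''=\tfrac12-\mathrm{Ric}(\gamma',\gamma')$, so that $f(\gamma(r))$ would grow like $\tfrac14 r^2$ were it not for the Ricci term. The delicate step is to show that $\int_0^r\mathrm{Ric}(\gamma',\gamma')\,ds$ is bounded by a constant multiple of $\sqrt{f}$, equivalently of $r$; this is carried out by a second-variation (Calabi-type) argument exploiting minimality of $\gamma$, which controls the integrated Ricci curvature and prevents it from cancelling the $\tfrac14 r^2$ growth. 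Combining the two integrations then yields the two-sided bound $\tfrac14(d(x)-c_1)^2\le f(x)\le\tfrac14(d(x)+c_2)^2$.
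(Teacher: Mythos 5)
Your proposal is correct and follows exactly the standard route: parts (1) and (2) by tracing the soliton equation and by the contracted second Bianchi identity leading to Hamilton's identity $\nabla S=2\,\mathrm{Ric}(\nabla f,\cdot)$, and part (3) by the Cao--Zhou argument (upper bound from $|\nabla\sqrt f|\le\tfrac12$ using Chen's $S\ge0$, lower bound from integrating $(f\circ\gamma)''=\tfrac12-\mathrm{Ric}(\gamma',\gamma')$ with a second-variation estimate on the integrated Ricci term). The paper does not prove this lemma itself but cites Cao--Zhou for (3), and your outline is precisely the argument of that reference; the only cosmetic point is that $f\ge0$ needs no separate arrangement, since the normalization $S+|\nabla f|^2=f$ together with $S\ge0$ gives it directly.
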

(3) in the above lemma is proved by Cao and Zhou \cite{Cao-Zhou}(see also Fang, Man and Zhang \cite{Fang-Man-Zhang} and for an improvement, Haslhofer and M\"{u}ller \cite{Haslhofer-Muller}). It is well known that a complete gradient shrinking Ricci solitons has nonnegative scalar curvature (See Chen \cite{Chen}) and either $S>0$ or the metric $g$ is flat (see Pigola, Rimoldi and Setti \cite{Pigola-Rimoldi-Setti2} or the author \cite{Zhang}). Recently, Chow, Lu and Yang \cite{Chow-Lu-Yang} proved that the scalar curvature of a complete noncompact nonflat shrinker has a lower bound by $Cd(x)^{-2}$ for some positive constant $C$.

From the above lemma, we have $|\nabla f|^{2}\leq f$, $-f\leq \Delta f \leq \frac{n}{2}$. Then $2\sqrt{f}$ looks like as a the distance function, hence we use $2\sqrt{f}$ as the cut-off function on the complete gradient shrinking Ricci solitons.  We denote $\rho(x)=2\sqrt{f(x)}$, then we have
\begin{equation}
\nabla\rho=\frac{\nabla f}{\sqrt{f}}, \quad \Delta\rho=\frac{\Delta f}{\sqrt{f}}-\frac{|\nabla f|^{2}}{2f^{3/2}}, \quad \Delta_{f}\rho=\frac{\frac{n}{2}-f}{\sqrt{f}}-\frac{|\nabla f|^{2}}{2f^{3/2}}.
\end{equation}
Then we have
\begin{equation}\label{EstimateRho}
|\nabla \rho|\leq 1, \quad -\sqrt{f}\leq \Delta_{f}\rho \leq \frac{n}{2\sqrt{f}}.
\end{equation}
Now we prove the divergence theorem on the complete shrinking gradient Ricci solitons.
\begin{proof}[Proof of Theorem \ref{DivergenceThm}]
We choose a cutoff function $\eta:[0,\infty)\rightarrow [0,1]$ such that $\eta(t)=1$ for $0\leq t\leq 1$, $\eta(t)=0$ for $t\geq 2$ and $0\leq-\eta'(t)\leq 4, |\eta''(t)|\leq 4$. Let
\begin{equation}
\phi(x)=\eta(\frac{\rho(x)}{r}).
\end{equation}
Then it is easy to compute
$$\nabla \phi(x)=\frac{\eta'}{r}\nabla\rho(x),$$
$$\Delta_{f} \phi(x)=\frac{\eta'}{r}\Delta_{f}(\rho)+\frac{\eta''}{r^{2}}|\nabla\rho|^{2}.$$
Since $\nabla \phi $ and $\Delta_{f} \phi$ may not vanish only if $r\leq \rho(x)\leq 2r$. If $r\geq 2n$, using (\ref{EstimateRho}), we have
$|\nabla \phi(x)|\leq \frac{4}{r}$, $|\Delta_{f} \phi(x)|\leq 4+\frac{4}{r^{2}}$. Then
\begin{equation*}
\int_{M}(\Delta_{f}u)\phi e^{-f}dv=\int_{M}u\Delta_{f}\phi e^{-f}dv
\end{equation*}
Since for large $r$, $\{x: r\leq \rho(x)\leq 2r\}\subseteqq \{r-c_{2}\leq d(x)\leq 2r+c_{1}\}$, we have
\begin{equation*}
|\int_{M}(\Delta_{f}u)\phi e^{-f}dv|\leq \int_{r-c_{2}\leq d(x)\leq 2r+c_{1}}u(x)(4+\frac{4}{r^{2}})e^{-f}dv.
\end{equation*}
Since $\Delta_{f}u\leq 0$ (or $\Delta_{f}u\geq 0$) and $\int_{M}ue^{-f}dv<+\infty$, let $r\rightarrow \infty$, then the right hand side of the above inequality is $0$. Hence we obtain $\int_{M}\Delta_{f}(u) e^{-f}dv=0$. So $\Delta_{f}u=0$.
\end{proof}

\section{Gradient estimate on complete shrinkers}
In this section, we prove a local version of gradient estimate for positive f-harmonic function on $M$.

First, we show the gradient estimate for the smooth metric measure space.
\begin{theorem}\label{2.1}
Let $(M,g, e^{-f}dv)$ be a smooth measure space with $Ric_{f}\geq-(n-1)K$ for $K\geq 0$. Fixed point $p\in M$. Assume $u$ is a positive $f$-harmonic function on $B_{p}(2R)$. Denote
\begin{equation*}
\theta(R)\doteqdot \sup_{B_{p}(2R)}|\nabla f|.
\end{equation*}
Then for any $0<\epsilon<1$, we have
\begin{eqnarray*}
\begin{aligned}
\sup_{B_{p}(R)}|\nabla \log u|&\leq \frac{c(n)}{1-\epsilon}(\frac{1}{R}+\frac{K^{1/4}+\theta(R)^{1/2}}{R^{1/2}})+\frac{1}{\sqrt{\epsilon(1-\epsilon)}}\theta(R)\\
&\quad+\frac{(n-1)\sqrt{K}}{\sqrt{1-\epsilon}}.
\end{aligned}
\end{eqnarray*}
\end{theorem}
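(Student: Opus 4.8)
The plan is to run a Cheng--Yau/Li--Yau type argument adapted to the weighted Laplacian, keeping careful track of the extra terms generated by $\nabla f$. First I would pass to $h=\log u$; since $\Delta_f u=0$, a direct computation gives $\Delta_f h=-|\nabla h|^2$. Writing $w=|\nabla h|^2$, the weighted Bochner formula
\[
\tfrac12\Delta_f w=|\nabla^2 h|^2+\langle\nabla h,\nabla\Delta_f h\rangle+{\rm Ric}_f(\nabla h,\nabla h),
\]
together with $\Delta_f h=-w$ and the hypothesis ${\rm Ric}_f\ge-(n-1)K$, yields the basic inequality
\[
\tfrac12\Delta_f w\ge|\nabla^2 h|^2-\langle\nabla w,\nabla h\rangle-(n-1)Kw.
\]

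For the Hessian term I would avoid the crude bound $|\nabla^2 h|^2\ge(\Delta h)^2/n$ and instead use the sharper Li--Yau decomposition along $e_1=\nabla h/|\nabla h|$, namely $|\nabla^2 h|^2\ge\frac{1}{n-1}(\Delta h-(\nabla^2 h)(e_1,e_1))^2+(\nabla^2 h)(e_1,e_1)^2$, where $(\nabla^2 h)(e_1,e_1)=\langle\nabla w,\nabla h\rangle/(2w)$; the factor $\frac{1}{n-1}$ is what ultimately produces the sharp coefficient $(n-1)\sqrt K$ in the statement. The genuinely weighted feature enters through $\Delta h=\Delta_f h+\langle\nabla f,\nabla h\rangle=-w+\langle\nabla f,\nabla h\rangle$: the unwanted cross term $\langle\nabla f,\nabla h\rangle$, bounded by $\theta(R)\sqrt w$ on $B_p(2R)$, must be absorbed. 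Applying Young's inequality with parameter $\epsilon$ to split $(-w+\langle\nabla f,\nabla h\rangle)^2$ into a dominant part $(1-\epsilon)w^2$ and a remainder controlled by $\theta(R)^2 w/\epsilon$ is exactly what manufactures the factors $1/(1-\epsilon)$ and $1/\sqrt{\epsilon(1-\epsilon)}$.

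Then I would localize. Choose a cutoff $\phi=\psi(\rho(x)/R)$ with $\rho$ the distance to $p$, $\psi\equiv1$ on $[0,1]$, vanishing past $2$, and the usual bounds $|\psi'|^2/\psi,|\psi''|\le C$. To estimate $\Delta_f\phi$ I need a weighted Laplacian comparison; under ${\rm Ric}_f\ge-(n-1)K$ and $|\nabla f|\le\theta(R)$ the Wei--Wylie comparison gives $\Delta_f\rho\le(n-1)\sqrt K\coth(\sqrt K\rho)+\theta(R)$, so that $|\Delta_f\phi|\lesssim K^{1/2}/R+\theta(R)/R+1/R^2$ on the annulus $R\le\rho\le2R$. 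I would then apply the maximum principle to $G=\phi w$: at an interior maximum $x_0$, $\nabla G=0$ gives $\nabla w=-(w/\phi)\nabla\phi$ and $\Delta_f G\le0$. Substituting the differential inequality for $w$, replacing $\nabla w$ via the first-order relation, and using the cutoff bounds turns $\Delta_f G\le0$ into a quadratic inequality of the shape $\frac{1-\epsilon}{n-1}G^2\le A\,G$, where $A$ collects the contributions $(n-1)K/(1-\epsilon)$, $\theta(R)^2/(\epsilon(1-\epsilon))$, and several $1/R$-type error terms such as $K^{1/2}/R$, $\theta(R)/R$, $1/R^2$ coming from $|\nabla\phi|^2/\phi$ and $\Delta_f\phi$; the mixed term $w^{3/2}|\nabla\phi|/\phi$ is absorbed into $G^2$ by another Young's inequality.

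Dividing by $G$ and taking square roots bounds $\sup_{B_p(R)}\sqrt w$ by $\sqrt A$; the elementary inequality $\sqrt{a+b+\cdots}\le\sqrt a+\sqrt b+\cdots$ then distributes $\sqrt A$ into precisely the listed terms, with the fractional powers $K^{1/4}/R^{1/2}$ and $\theta(R)^{1/2}/R^{1/2}$ arising as the square roots of the mixed error contributions $\sqrt K/R$ and $\theta(R)/R$. I expect the main obstacle to be the bookkeeping forced by the weighted setting: unlike the unweighted Cheng--Yau estimate, the term $\langle\nabla f,\nabla h\rangle$ appears both in $\Delta h$ (inside the Hessian lower bound) and, through $\langle\nabla f,\nabla\phi\rangle$, inside $\Delta_f\phi$, and both must be controlled by $\theta(R)$ simultaneously while keeping the $\epsilon$-splitting compatible with the Young's inequalities used on the $w^{3/2}$ cross terms, so that the final coefficients assemble into the stated closed form rather than degrading.
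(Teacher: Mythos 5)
Your proposal follows essentially the same route as the paper's proof: the Cheng--Yau argument with $h=\log u$, the refined Hessian decomposition along $e_1=\nabla h/|\nabla h|$ producing the $\frac{1}{n-1}$ factor, an $\epsilon$-Young splitting to absorb the $\langle\nabla f,\nabla h\rangle$ term (yielding the $1/(1-\epsilon)$ and $1/\sqrt{\epsilon(1-\epsilon)}$ coefficients), the Wei--Wylie comparison to control $\Delta_f$ of the distance-based cutoff, and the maximum principle applied to $\phi$ times $|\nabla h|^2$. The only deviations (normalizing the cutoff as $\phi w$ rather than $\phi^2 w$, and absorbing the $G^{3/2}$ cross term by Young's inequality rather than completing the square) are cosmetic and do not change the argument.
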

\begin{proof}
The argument is standard from Cheng and Yau, \cite{Cheng-Yau}, also cf. \cite{Li-Wang}. For the convenience of readers, we provided the detail of the computation here.

Set $h=\log u$, cutoff function $\phi$, $G=\phi^{2}|\nabla h|^{2}$. By the Bochner formula for $f$-Laplacian, since $\Delta_{f}h=\frac{\Delta_{f}u}{u}-|\nabla h|^{2}=-|\nabla h|^{2}$,
\begin{eqnarray}
\begin{aligned}
\frac{1}{2}\Delta_{f}|\nabla h|^{2}&= |h_{ij}|^{2}+\langle\nabla u, \nabla(\Delta_{f}h)\rangle+{\rm Ric}_{f}(\nabla h,\nabla h)\\
&\geq |h_{ij}|^{2}-\langle\nabla h, \nabla |\nabla h|^{2}\rangle-(n-1)K|\nabla h|^{2}.
\end{aligned}
\end{eqnarray}
We compute at the point $x$ that $|\nabla h|(x)\neq 0$,  choosing an orthonormal frame $\{e_{i}\}$ at $x$ such that $e_{1}=\frac{\nabla h}{|\nabla h|}$, then we have
\begin{eqnarray*}
\begin{aligned}
|h_{ij}|^{2}&\geq |h_{11}|^{2}+\sum_{\alpha>1}|h_{\alpha\alpha}|^{2}+2\sum_{\alpha>1}|h_{1\alpha}|^{2}\\
&\geq |h_{11}|^{2}+2\sum_{\alpha>1}|h_{1\alpha}|^{2}+\frac{1}{n-1}|\sum_{\alpha>1}h_{\alpha\alpha}|^{2}\\
&=|h_{11}|^{2}+2\sum_{\alpha>1}|h_{1\alpha}|^{2}+\frac{1}{n-1}||\nabla h|^{2}+h_{11}-\langle\nabla f, \nabla h\rangle|^{2}
\end{aligned}
\end{eqnarray*}
Using Cauchy-Schwarz's inequality, for any $1>\epsilon>0$,
\begin{eqnarray*}
\begin{aligned}
&\quad||\nabla h|^{2}+h_{11}-\langle\nabla f, \nabla h\rangle|^{2}\\
&=|\nabla h|^{4}+h_{11}^{2}+2h_{11}|\nabla h|^{2}-2\langle\nabla f,\nabla h\rangle|\nabla h|^{2}-2\langle\nabla f,\nabla h\rangle h_{11}\\
&\quad+\langle\nabla f,\nabla h\rangle^{2}\\
&\geq |\nabla h|^{4}+h_{11}^{2}+2h_{11}|\nabla h|^{2}-\epsilon|\nabla h|^{4}-\frac{1}{\epsilon}\langle\nabla f,\nabla h\rangle^{2}\\
&\quad-h_{11}^{2}-\langle\nabla f,\nabla h\rangle^{2}+\langle\nabla f,\nabla h\rangle^{2}\\
&\geq(1-\epsilon)|\nabla h|^{4}+2h_{11}|\nabla h|^{2}-\frac{1}{\epsilon}|\nabla f|^{2}|\nabla h|^{2}.
\end{aligned}
\end{eqnarray*}
So
\begin{eqnarray*}
\begin{aligned}
|h_{ij}|^{2}&\geq\sum h_{1i}^{2}+\frac{1-\epsilon}{n-1}|\nabla h|^{4}\\
&\quad +\frac{2}{n-1}h_{11}|\nabla h|^{2}-\frac{1}{(n-1)\epsilon}|\nabla f|^{2}|\nabla h|^{2}.
\end{aligned}
\end{eqnarray*}

On the other hand, notice that
\begin{equation*}
\langle\nabla|\nabla h|^{2}, \nabla h\rangle=2h_{ij}h_{i}h_{j}=2h_{11}|\nabla h|^{2},
\end{equation*}
and
\begin{equation*}
|\nabla|\nabla h|^{2}|^{2}=4\sum|h_{ij}h_{j}|^{2}=4\sum h_{1i}^{2}|\nabla h|^{2},
\end{equation*}
which imply
\begin{eqnarray*}
\begin{aligned}
|h_{ij}|^{2}
&\geq \frac{1}{4}|\nabla h|^{-2}|\nabla |\nabla h|^{2}|^{2}\\
&\quad+\frac{1}{n-1}((1-\epsilon)|\nabla h|^{4}+\langle\nabla|\nabla h|^{2},\nabla h\rangle-\frac{1}{\epsilon}|\nabla f|^{2}|\nabla h|^{2}).
\end{aligned}
\end{eqnarray*}
Hence
\begin{eqnarray}\label{2.2}
\begin{aligned}
\frac{1}{2}\Delta_{f}|\nabla h|^{2}&\geq \frac{1}{4}|\nabla h|^{-2}|\nabla |\nabla h|^{2}|^{2}+\frac{1-\epsilon}{n-1}|\nabla h|^{4}-(n-1)K|\nabla h|^{2}\\
&\quad-\frac{n-2}{n-1}\langle\nabla |\nabla h|^{2}, \nabla h\rangle-\frac{1}{(n-1)\epsilon}|\nabla f|^{2}|\nabla h|^{2}.
\end{aligned}
\end{eqnarray}
Since
\begin{equation*}
\Delta_{f}G=(\Delta_{f}\phi^{2})|\nabla h|^{2}+\phi^{2}\Delta_{f}|\nabla h|^{2}+2\langle\nabla \phi^{2},\nabla|\nabla h|^{2}\rangle,
\end{equation*}
by (\ref{2.2}),
\begin{eqnarray*}
\begin{aligned}
&\quad\frac{1}{2}\Delta_{f}G\\
&\geq \frac{1}{4}\phi^{4}G^{-1}|\nabla(\phi^{-2}G)|^{2}+\frac{1-\epsilon}{n-1}\phi^{-2}G^{2}-(n-1)KG\\
&\quad-\frac{n-2}{n-1}\phi^{2}\langle\nabla (\phi^{-2}G),\nabla h\rangle-\frac{1}{(n-1)\epsilon}|\nabla f|^{2}G \\
&\quad+\frac{1}{2}\phi^{-2}(\Delta_{f}\phi^{2})G+\langle\nabla \phi^{2}, \nabla(\phi^{-2}G)\rangle
\end{aligned}
\end{eqnarray*}
At the maximum point $x_{0}$, $\nabla G(x_{0})=0, \Delta G(x_{0})\leq 0$ and $\phi(x_{0})>0$, we have
\begin{eqnarray*}
\begin{aligned}
0\geq& \frac{1-\epsilon}{n-1}G^{2}-(n-1)K\phi^{2}G-3|\nabla \phi|^{2}G+\frac{2(n-2)}{n-1}\phi\langle\nabla\phi,\nabla h\rangle G\\
&-\frac{1}{(n-1)\epsilon}|\nabla f|^{2}\phi^{2}G+\frac{1}{2}(\Delta_{f}\phi^{2})G.
\end{aligned}
\end{eqnarray*}

Since $\phi\langle\nabla\phi,\nabla h\rangle\geq -|\nabla \phi|G^{1/2}$, then
\begin{eqnarray*}
\begin{aligned}
0\geq& (1-\epsilon)G^{2}-(n-1)^{2}K\phi^{2}G-3(n-1)|\nabla \phi|^{2}G-2(n-2)|\nabla\phi|G^{3/2}\\
&-\frac{1}{\epsilon}|\nabla f|^{2}\phi^{2}G+\frac{n-1}{2}(\Delta_{f}\phi^{2})G.
\end{aligned}
\end{eqnarray*}
This can be written as
\begin{eqnarray}
\begin{aligned}
(1-\epsilon)G&\leq-(n-1)\phi\Delta_{f}\phi+2(n-1)|\nabla\phi|^{2}\\
&\quad+(\frac{1}{\epsilon}|\nabla f|^{2}+(n-1)^{2}K)\phi^{2}+2(n-2)|\nabla\phi|G^{1/2}.
\end{aligned}
\end{eqnarray}
Then
\begin{eqnarray*}
\begin{aligned}
(G^{1/2}-\frac{n-2}{1-\epsilon}|\nabla \phi|)^{2}&\leq \frac{1}{1-\epsilon}(-(n-1)\phi\Delta_{f}\phi+2(n-1)|\nabla\phi|^{2})\\
&\quad+\frac{(n-2)^{2}}{(1-\epsilon)^{2}}|\nabla\phi|^{2}+\frac{1}{\epsilon(1-\epsilon)}|\nabla f|^{2}\phi^{2}+\frac{(n-1)^{2}K}{1-\epsilon}\phi^{2}.
\end{aligned}
\end{eqnarray*}
Hence
\begin{eqnarray}\label{G}
\begin{aligned}
G^{1/2}&\leq \sqrt{\frac{1}{1-\epsilon}(-(n-1)\phi\Delta_{f}\phi+2(n-1)|\nabla\phi|^{2})}+\frac{2(n-2)}{1-\epsilon}|\nabla \phi|\\
&\quad+\frac{1}{\sqrt{\epsilon(1-\epsilon)}}|\nabla f|+\frac{(n-1)\sqrt{K}}{\sqrt{1-\epsilon}},
\end{aligned}
\end{eqnarray}
where we have used the inequality $\sqrt{A+B}\leq \sqrt{A}+\sqrt{B}$ for any $A\geq 0, B\geq 0$ and $\phi\leq 1$.

We consider the cutoff function as following, $\phi(x)=\eta(\frac{d(x)}{R})$, $\eta$ is the same in the proof of the Theorem \ref{DivergenceThm}. Using the comparison theorem was proved by Wei and Wylie (\cite{Wei-Wylie}, Theorem 1.1), we have
\begin{equation}
\Delta_{f}d(x)\leq (n-1)(\frac{1}{d(x)}+\sqrt{K})+\theta(R).
\end{equation}
Then $|\nabla\phi|\leq \frac{4}{R}$, and
\begin{eqnarray*}
\begin{aligned}
-\Delta_{f}\phi&=-\frac{\eta'}{R}\Delta_{f}d(x)-\frac{\eta''}{R^{2}}\\
&\leq \frac{4}{R}((n-1)(\frac{1}{R}+\sqrt{K})+\theta(R))+\frac{4}{R^{2}}.
\end{aligned}
\end{eqnarray*}
Hence by (\ref{G}), we get
\begin{eqnarray*}
\begin{aligned}
G^{1/2}&\leq \frac{c(n)}{\sqrt{1-\epsilon}}(\frac{1}{R}+\frac{K^{1/4}}{R^{1/2}}+\frac{\theta(R)^{1/2}}{R^{1/2}})+\frac{8n}{(1-\epsilon)R}\\
&\quad+\frac{1}{\sqrt{\epsilon(1-\epsilon)}}\theta(R)+\frac{(n-1)\sqrt{K}}{\sqrt{1-\epsilon}}\\
&\leq \frac{c(n)}{1-\epsilon}(\frac{1}{R}+\frac{K^{1/4}+\theta(R)^{1/2}}{R^{1/2}})+\frac{1}{\sqrt{\epsilon(1-\epsilon)}}\theta(R)\\
&\quad+\frac{(n-1)\sqrt{K}}{\sqrt{1-\epsilon}}
\end{aligned}
\end{eqnarray*}
Hence
\begin{eqnarray*}
\begin{aligned}
\sup_{B_{p}(R)}|\nabla \log u|&\leq \frac{c(n)}{1-\epsilon}(\frac{1}{R}+\frac{K^{1/4}+\theta(R)^{1/2}}{R^{1/2}})+\frac{1}{\sqrt{\epsilon(1-\epsilon)}}\theta(R)\\
&\quad+\frac{(n-1)\sqrt{K}}{\sqrt{1-\epsilon}}.
\end{aligned}
\end{eqnarray*}
\end{proof}

\begin{remark}
If $\theta(R)=0$, we can take $\epsilon=0$. Then
\begin{equation*}
\sup_{B_{p}(R)}|\nabla \log u|\leq c(n)(\frac{1}{R}+\frac{K^{1/4}}{R^{1/2}})+(n-1)\sqrt{K}.
\end{equation*}
\end{remark}
\begin{theorem}\label{LocalGradientEstimate1}(Theorem \ref{LocalGradientEstimate})
Let $(M,g,f)$ be a complete gradient shrinking Ricci soliton, assume $c_{1}, c_{2}$ are the constants in Lemma 1.1, let $c_{3}=c_{1}+c_{2}$, then if $u$ is a positive $f$-harmonic function on $B_{p}(3R)$ for $R>c_{3}$, then we have
\begin{equation}
\sup_{x\in B_{p}(R)}|\nabla \log u|(x)\leq \frac{20n}{R}+2R+2c_{2}+c_{3}.
\end{equation}
\end{theorem}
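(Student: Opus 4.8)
The plan is to avoid invoking Theorem \ref{2.1} in its distance-based form (whose subleading term is only $O(1)$, not $O(1/R)$) and instead to reuse the cutoff-free inequality (\ref{G}) established in the course of proving Theorem \ref{2.1}, specialized to the soliton. The point is twofold. First, since $\mathrm{Ric}_f=\tfrac12 g\ge 0$, one may take $K=0$ in (\ref{G}) and the curvature contribution drops out entirely. Second, rather than a distance cutoff I would build the cutoff from the potential, using $\rho=2\sqrt f$, for which (\ref{EstimateRho}) supplies the clean bounds $|\nabla\rho|\le 1$ and $\Delta_f\rho\le \tfrac{n}{2\sqrt f}$. It is precisely the smallness of $\Delta_f\rho$ where $f$ is large that replaces the large term $\theta(R)\sim R$ coming from the distance comparison, and this is what produces the $\tfrac{20n}{R}$ gain.

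Concretely, I would set $h=\log u$, $G=\phi^2|\nabla h|^2$, and take $\phi(x)=\eta(\rho(x)/r)$ with $r=R+c_2$ and $\eta$ the cutoff of the previous proofs. With $K=0$, inequality (\ref{G}) reads
\[
G^{1/2}\le \sqrt{\tfrac{1}{1-\epsilon}\bigl(-(n-1)\phi\Delta_f\phi+2(n-1)|\nabla\phi|^2\bigr)}+\tfrac{2(n-2)}{1-\epsilon}|\nabla\phi|+\tfrac{1}{\sqrt{\epsilon(1-\epsilon)}}|\nabla f|.
\]
On the annular region $r\le\rho\le 2r$ where $\nabla\phi$ and $\Delta_f\phi$ are supported one has $\sqrt f=\rho/2\ge r/2$, so (\ref{EstimateRho}) gives $\Delta_f\rho\le n/r$; together with $0\le-\eta'\le4$, $|\eta''|\le4$, $|\nabla\rho|\le1$ this yields $|\nabla\phi|\le 4/r$ and $-\phi\Delta_f\phi\le 4(n+1)/r^2$. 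At the maximum point $x_0$ of $G$ we have $x_0$ in the support of $\phi$, hence $\sqrt{f(x_0)}=\rho(x_0)/2\le r$ and $|\nabla f|(x_0)\le\sqrt{f(x_0)}\le r$ by Lemma 1.1(2).

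Then I would choose $\epsilon=\tfrac12$, which minimizes the coefficient $\tfrac{1}{\sqrt{\epsilon(1-\epsilon)}}$ of the dominant term to the sharp value $2$. Substituting the above bounds collapses the first two terms into $C(n)/r$ with $C(n)=2\sqrt2\,\sqrt{(n-1)(n+9)}+16(n-2)$, and the last term into $2|\nabla f|(x_0)\le 2r$, so that $G^{1/2}\le C(n)/r+2r$. A one-line check, using $\sqrt{(n-1)(n+9)}\le n+4$, gives $C(n)\le(16+2\sqrt2)n+(8\sqrt2-32)\le 20n$; hence $G^{1/2}\le \tfrac{20n}{r}+2r$. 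Since $\phi\equiv1$ on $B_p(R)$ one reads off $\sup_{B_p(R)}|\nabla\log u|\le G(x_0)^{1/2}$, and plugging $r=R+c_2$ together with $\tfrac{20n}{R+c_2}\le\tfrac{20n}{R}$ and $2(R+c_2)=2R+2c_2$ gives the asserted $\tfrac{20n}{R}+2R+2c_2+c_3$ (the $c_3$ entering as harmless slack).

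The one genuinely delicate bookkeeping step, and the main obstacle, is the translation between level sets of $\rho$ and genuine distance balls via Lemma 1.1(3), $d-c_1\le\rho\le d+c_2$. On the one hand $\rho\le d+c_2\le R+c_2$ on $B_p(R)$ forces the choice $r\ge R+c_2$ so that $\phi\equiv1$ there; on the other hand the support $\{\rho\le 2r\}$ lies inside $B_p(3R)$ only if $2r+c_1\le 3R$, since $d\le\rho+c_1\le 2r+c_1$ on it. With $r=R+c_2$ this requires $R\gtrsim c_1+2c_2$, which is why the hypothesis asks $R$ to exceed (a constant multiple of) $c_3=c_1+c_2$ and why the domain is enlarged to $B_p(3R)$ rather than $B_p(2R)$: the extra radius is exactly the room needed to absorb the discrepancy between $\rho$ and $d$. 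Everything else is the routine substitution above, the analytic content being entirely inherited from (\ref{G}).
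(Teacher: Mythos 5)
Your proposal is correct and follows essentially the same route as the paper: specialize inequality (\ref{G}) with $K=0$, build the cutoff from $\rho=2\sqrt f$ via $\phi=\eta(\rho/r)$ with $r=R+c_2$, use (\ref{EstimateRho}) and $|\nabla f|\le\sqrt f$ on the support, and take $\epsilon=\tfrac12$; your constant-tracking (including $C(n)\le 20n$) matches the paper's $\tfrac{10n}{(1-\epsilon)r}$ bookkeeping, and your handling of the $\rho$-versus-$d$ translation is exactly the paper's "$r_1=2r+c_1\le 3R$" step.
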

\begin{proof}
Same computation as in the proof of Theorem \ref{2.1}, and for shrinking Ricci solitons we take $K=0$. Here we take cut-off function $\phi(x)=\eta(\frac{\rho(x)}{r})$, where $\eta(t)$ is the same as in the proof of Theorem \ref{2.1}.  The support set of $\phi(x)$ is contained in $B_{p}(r_{1})$, here $r_{1}=2r+c_{1}$, we choose $R$ such that $r_{1}\leq 3R$.

Let $h=\log f$. Consider $G:B_{p}(r_{1})\rightarrow \mathbb{R}$, $G=\phi^{2}|\nabla h|^{2}$. Since $G$ is nonnegative on $B_{p}(r_{1})$ and $G=0$ on $\partial B_{p}(r_{1})$, it follows that $G$ attains a maximum point in the interior of $B_{p}(r_{1})$. Let $x_{0}$ be this maximum point. By the maximum principle,
\begin{eqnarray}
\begin{aligned}
&\nabla G(x_{0})=0,\\
&\Delta G(x_{0})\leq 0
\end{aligned}
\end{eqnarray}

For $r\leq \rho\leq 2r$, using (\ref{EstimateRho}), we have
\begin{eqnarray*}
\begin{aligned}
-\Delta_{f}\phi(x)&=-\frac{\eta'}{r}\Delta _{f}\rho(x)-\frac{\eta''}{r^{2}}|\nabla \rho(x)|^{2}\\
&\leq \frac{2n}{r\sqrt{f}}+\frac{4}{r^{2}}\\
&\leq \frac{4n+4}{r^{2}}.
\end{aligned}
\end{eqnarray*}
Since $|\nabla f|^{2}\leq f\leq \frac{1}{4}(d(x)+c_{2})^{2}\leq \frac{1}{4}(2r+c_{1}+c_{2})^{2}$, taking $K=0$ in (\ref{G}), and using $|\nabla \phi|\leq \frac{4}{r}$, then we have
\begin{equation*}
G^{1/2}\leq \frac{10n}{(1-\epsilon)r}+\frac{1}{\sqrt{\epsilon(1-\epsilon)}}(r+\frac{c_{3}}{2}).
\end{equation*}

If $d(x)\leq r-c_{2}$, $\rho(x)\leq r$, then $\phi(x)=1$. Hence if we choose $r=R+c_{2}$, then
\begin{equation}
\sup_{B_{p}(R)}|\nabla h|\leq \frac{10n}{(1-\epsilon)(R+c_{2})}+\frac{1}{\sqrt{\epsilon(1-\epsilon)}}(R+c_{2}+\frac{c_{3}}{2}).
\end{equation}
Take $\epsilon=\frac{1}{2}$, then
\begin{equation}
\sup_{B_{p}(R)}|\nabla h|\leq \frac{20n}{R}+2R+2c_{2}+c_{3}.
\end{equation}
\end{proof}

By Theorem \ref{LocalGradientEstimate1}, we can obtain the Harnack inequality for complete shrinking gradient Ricci solitons.
\begin{corollary}\label{HarnackInequality1}(Corollary \ref{HarnackInequality})
Let $(M,g,f)$ be a complete shrinking gradient Ricci soliton. Fixed point $p \in M$, assume $u$ is a positive $f$-harmonic function on $M$, then
\begin{equation}
u(x)\leq u(p)e^{3d(x)^{2}}
\end{equation}
for $d(x)\geq \max\{2(2c_{2}+c_{3}), 10n\}$. Here $d(x)$ means the distance function from $x$ to $p$, $c_{2}, c_{3}$ are the constants in Theorem \ref{LocalGradientEstimate1}.
\end{corollary}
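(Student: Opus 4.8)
The plan is to integrate the pointwise gradient bound furnished by Theorem~\ref{LocalGradientEstimate1} along a minimizing geodesic joining $p$ to $x$, and then to absorb the resulting lower-order terms into $3d(x)^2$ using the two quantitative lower bounds assumed on $d(x)$. To begin, I set $h=\log u$, which is smooth and well defined because $u$ is positive, and I fix a point $x\in M$, writing $D=d(x)$. Since $u$ is $f$-harmonic on all of $M$, it is in particular $f$-harmonic on $B_{p}(3D)$, so Theorem~\ref{LocalGradientEstimate1} applies with $R=D$ as soon as $D>c_{3}$; this is guaranteed by the hypothesis $D\geq 2(2c_{2}+c_{3})$, which forces $D\geq 2c_{3}>c_{3}$. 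The theorem then gives
\begin{equation*}
\sup_{B_{p}(D)}|\nabla h|\leq \frac{20n}{D}+2D+2c_{2}+c_{3}.
\end{equation*}

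Next I would take a unit-speed minimizing geodesic $\gamma:[0,D]\to M$ with $\gamma(0)=p$ and $\gamma(D)=x$. Each point $\gamma(t)$ satisfies $d(\gamma(t))=t\leq D$, so the entire geodesic lies in $B_{p}(D)$ and the supremum bound above holds at every $\gamma(t)$. By the fundamental theorem of calculus together with the Cauchy--Schwarz inequality,
\begin{equation*}
h(x)-h(p)=\int_{0}^{D}\langle\nabla h(\gamma(t)),\gamma'(t)\rangle\,dt\leq\int_{0}^{D}|\nabla h(\gamma(t))|\,dt\leq D\left(\frac{20n}{D}+2D+2c_{2}+c_{3}\right),
\end{equation*}
which simplifies to $h(x)-h(p)\leq 20n+2D^{2}+(2c_{2}+c_{3})D$.

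It then remains only the elementary comparison $20n+2D^{2}+(2c_{2}+c_{3})D\leq 3D^{2}$, equivalently $20n+(2c_{2}+c_{3})D\leq D^{2}$. Here the assumption $D\geq 2(2c_{2}+c_{3})$ yields $(2c_{2}+c_{3})D\leq\tfrac12 D^{2}$, while $D\geq 10n$ yields $D^{2}\geq 100n^{2}\geq 40n$ and hence $20n\leq\tfrac12 D^{2}$; adding these gives the claim. Exponentiating $h(x)-h(p)\leq 3D^{2}$ produces $u(x)\leq u(p)e^{3d(x)^{2}}$, as desired.

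I expect no genuine obstacle here beyond careful bookkeeping: the two delicate points are (i) verifying that the geodesic remains inside the ball on which the gradient estimate is valid, which is immediate since $d(\gamma(t))=t$ for a minimizing geodesic emanating from $p$, and (ii) the final numerical comparison, in which the two hypotheses on $d(x)$ are calibrated precisely so that each lower-order term is dominated by $\tfrac12 D^{2}$. The substantive analytic content has already been spent in establishing Theorem~\ref{LocalGradientEstimate1}.
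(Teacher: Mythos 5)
Your proposal is correct and follows essentially the same route as the paper: apply Theorem \ref{LocalGradientEstimate1} on the ball $B_{p}(d(x))$, integrate $|\nabla\log u|$ along a unit-speed minimizing geodesic from $p$ to $x$, and use the two lower bounds on $d(x)$ to absorb the terms $20n/D+2c_{2}+c_{3}$ into $3D^{2}$. The only cosmetic difference is that the paper first simplifies the gradient bound to $3d(x)$ before integrating, whereas you integrate first and do the numerical comparison afterwards; the bookkeeping is equivalent.
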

\begin{proof}
For any $x\in M$ with $d(x)\geq \max\{2c_{3}, 10n\}$, connecting $x$ and $p$ by a minimal geodesic $\gamma(t)$, by the triangle inequality, we know $\gamma(t)$ is contained in $B_{p}(d(x))$. By Theorem \ref{LocalGradientEstimate1}, we have
$$\sup_{y\in B_{p}(d(x))}|\nabla \log u(y)|\leq 3d(x).$$
So
\begin{eqnarray*}
\begin{aligned}
\log u(x)-\log u(p)&=\int_{0}^{d(x)}\frac{d}{dt}\log u(\gamma (t))dt\\
&=\int_{0}^{d(x)}\langle\nabla \log u, \dot{\gamma}(t)\rangle dt\\
&\leq \int_{0}^{d(x)}|\nabla \log u|dt\\
&\leq 3d(x)^{2}.
\end{aligned}
\end{eqnarray*}
Hence
$$u(x)\leq u(p)e^{3d(x)^{2}}$$
for any $x$ that $d(x)\geq \max\{2(2c_{2}+c_{3}), 10n\}$.
\end{proof}
\begin{remark}
Using the same proof, we can also get a local version of Harnack estimate for the positive $f$-harmonic functions on $B_{p}(3R)$ for $R> C_{1}$, here $C_{1}$ is a uniform constant. Then there exists a constant $C_{2}$ such that for any $x\in B_{p}(R)$, we have
\begin{equation*}
u(x)\leq C_{2}u(p)e^{3d(x)^{2}}.
\end{equation*}
\end{remark}
\section{Proof of Theorem \ref{MainThm}}
Now we can prove Theorem \ref{MainThm}.
\begin{proof}[Proof of Theorem \ref{MainThm}]
Fixed a point $p\in M$. Set $h=\log(1+u)$, then $\Delta_{f}h=-|\nabla h|^{2}\leq 0$. By Corollary \ref{HarnackInequality1}, there exists a positive constant $C$, such that $0<h(x)\leq C(d(x)+1)^{2}$. Using the facts
$$\frac{1}{4}(d(x)-c_{1})^{2}\leq f(x)\leq \frac{1}{4}(d(x)+c_{2})^{2}$$
and the volume growth of $M$ is polynomial with degree at most $n$, we have
$$\int_{M}h(x)e^{-f}dv<+\infty.$$
Then by the Theorem \ref{DivergenceThm}, we have
\begin{equation*}
\Delta_{f}(h)=0.
\end{equation*}
Hence
\begin{equation*}
|\nabla h|^{2}=0.
\end{equation*}
So $h$ is constant and $u$ is constant.
\end{proof}

Then we can obtain the Corollary \ref{LpLiouvilleThm} and Corollary \ref{LpLiouvilleThm1}.
\begin{proof}[Proof of Corollary \ref{LpLiouvilleThm} and Corollary \ref{LpLiouvilleThm1}]
Since
$$\Delta_{f}u^{p}=pu^{p-1}\Delta_{f}u+p(p-1)u^{p-2}|\nabla u|^{2},$$
if $p\geq 1$ and $u\geq 0$, then $\Delta_{f}u\geq 0$ implies
$$\Delta_{f}u^{p}\geq 0.$$
If $0<p\leq 1$ and $u>0$, then $\Delta_{f}u\leq 0$ implies
$$\Delta_{f}u^{p}\leq 0.$$
Then by Theorem \ref{MainThm},  we know $u$ is constant.
\end{proof}

\section*{Acknowledgements}
The first author would like to thank Professor Gang Tian and Yuguang Shi for persistent encouragement, his research is partially supported by NSFC No. 11501027 and the Fundamental Research Funds for the Central Universities (No. 2015JBM103, 2014RC028 and 2016JBM071). The work is carried out during the second author's visit at Rutgers University. The second author would like to thank Math Department of Rutgers University for its hospitality.  He is partially supported by NSFC No. 11301017, Research Fund for the Doctoral Program of Higher Education of China, the Fundamental Research Funds for the Central Universities and the Scholarship from China Scholarship Council. Both authors would like to thank Jiayong Wu for his useful suggestions.

\bibliographystyle{amsplain}

\begin{thebibliography}{10}



\bibitem {B} Brighton, K., A Liouville-Type theorem for smooth measure spaces, J. Geom. Anal., 23(2013), 562-570.
\bibitem{Cao-Zhou} Cao, H. D. and Zhou, D. T., On complete gradient shrinking Ricci solitons, J. Diff. Geom., 85 (2010), 175-185.
\bibitem{Chen} Chen, B. L., Strong uniqueness of the Ricci flow, J. Diff. Geom., 82 (2009), 363-382.
\bibitem{Chen-Chen} Chen, L. and Chen, W. Y., Gradient estimate for positive $f$-harmonic functions, Acta Math. Sci., 30B(5) (2010), 1614-1618.
\bibitem{Cheng-Yau} Cheng, S. Y. and Yau, S. T., Differential equations on Riemannian manifolds and their geometric applications, Comm. Pure Appl. Math., 28(1975), 333-354.
\bibitem{Chow-Lu-Yang} Chow, B., Lu, P. and Yang, B.,  Lower lower bounds for the scalar curvatures of noncompact gradient Ricci solitons, C. R. Math. Acad. Sci. Paris, 349 (2011), 1265-1267.
\bibitem{Fang-Man-Zhang} Fang, F. Q., Man, J. W. and Zhang, Z. L., Complete gradient shrinking Ricci solitons have finite topological type, C. R. Math. Acad. Sci. Paris 346 (2008), 653-656.
\bibitem{Haslhofer-Muller} Haslhofer R., and M\"{u}ller, R., A compactness theorem for complete Ricci shrinkers, Geometric and Functional Analysis 21 (2011), 1091-1116.
\bibitem{Li-Wang} Li,P. and Wang, J. P., Complete manifolds with positive spectrum, II, J. Diff. Geom., 62(2002), 143-162.
\bibitem{Ma} Ma, L., Liouville Theorems, Volume Growth, and Volume Comparison for Ricci shrinkers, Arxiv: 1609.09332.
\bibitem{Munteanu} Munteanu, O., On the gradient estimate of Cheng and Yau, Proc. Amer. Math. Soc., 140(2011), 1437-1443.
\bibitem{Munteanu-Sesum} Munteanu, O. and Sesum, N., On gradient Ricci solitons, J. Geom. Anal., 23(2013), 539-561.
\bibitem{Munteanu-WangJ} Munteanu, O. and Wang, J. P., Smooth metric measure spaces with non-negative curvature. Comm. Anal. Geom. 19 (2011), no. 3, 451-486.
\bibitem{Munteanu-WangM} Munteanu, O. and Wang, M. T., The curvature of gradient Ricci solitons. Math. Res. Lett. 18 (2011), 1051-1069.
\bibitem{Pigola-Rimoldi-Setti1} Pigola, S., Rimoldi, M., and Setti, A.G., Vanishing theorems on Riemannian manifolds and geometric applications, J. Func. Anal., 229(2005), 424-461.
\bibitem{Pigola-Rimoldi-Setti2} Pigola, S., Rimoldi, M., and Setti, A.G., Remarks on noncompact gradient Ricci solitons, Math. Z., 268(2011), 777-790.
\bibitem{Wei-Wylie} Wei, G., and Wylie,  Comparison geometry for the Bakry-$\acute{{\rm E}}$mery Ricci tensor, J. Diff. Geom., 83(2009), 377-405.
\bibitem{Wu1} Wu, J. Y., Upper Bounds on the First Eigenvalue for a Diffusion Operator via Bakry-Emery Ricci Curvature II, Results Math., 63(2013), 1079-1094.
\bibitem{Wu2} Wu, J. Y.,  $L^{p}$-Liouville theorems on complete smooth metric measure spaces, Bull. Sci. math. 138(2014), 510-539.
\bibitem{Wu-Wu} Wu, J. Y. and Wu, P., Heat kernel on smooth metric measure spaces with nonnegative curvature, Math. Ann., 362(2015), 717-742.
\bibitem{Yau} Yau, S. T., Harmonic functions on complete Riemannian manifolds, Comm. Pure Appl. Math., 28(1975), 201-228.
\bibitem{Zhang} Zhang, S. J., On a sharp volume estimate for gradient Ricci solitons under the scalar curvature bounded from below, Acta Math. Sinica, English series, 27 (2011), 871-882.
\end{thebibliography}

\end{document}